\tikzset{help lines/.style={color=blue!50,very thin}}
\def\part#1{\frac{\partial\phantom{#1}}{\partial#1}}
\newtheorem{thm}{Theorem}
\newtheorem{theorem}[thm]{Theorem}
\newenvironment{proof}{\begin{trivlist}\item[]{\bf Proof} }%
{\hfill $\Box$ \end{trivlist}}
{\end{trivlist}}
\newenvironment{remark}{\begin{trivlist}\item[]{\bf Remark} }%
{\end{trivlist}}
\newenvironment{example}{\begin{trivlist}\item[]{\bf Example} }%
{\end{trivlist}}
{\end{trivlist}}
\def\Z{\ifmmode{{\mathbb Z}}\else{${\mathbb Z}$}\fi}
\def\Q{\ifmmode{{\mathbb Q}}\else{${\mathbb Q}$}\fi}
\def\C{\ifmmode{{\mathbb C}}\else{${\mathbb C}$}\fi}
\def\P{\ifmmode{{\mathbb P}}\else{${\mathbb P}$}\fi}
\def\R{\ifmmode{{\mathbb R}}\else{${\mathbb R}$}\fi}
\def\H{\ifmmode{{\mathrm H}}\else{${\mathrm H}$}\fi}
\def\B{\ifmmode{{\mathcal B}}\else{${\mathcal B}$}\fi}
\def\E{\ifmmode{{\mathcal E}}\else{${\mathcal E}$}\fi}
\def\F{\ifmmode{{\mathcal F}}\else{${\mathcal F}$}\fi}
\def\K{\ifmmode{{\mathcal K}}\else{${\mathcal K}$}\fi}
\def\L{\ifmmode{{\mathcal L}}\else{${\mathcal L}$}\fi}
\def\M{\ifmmode{{\mathcal M}}\else{${\mathcal M}$}\fi}
\def\N{\ifmmode{{\mathcal N}}\else{${\mathcal N}$}\fi}
\def\O{\ifmmode{{\mathcal O}}\else{${\mathcal O}$}\fi}
\def\U{\ifmmode{{\mathcal U}}\else{${\mathcal U}$}\fi}
\def\V{\ifmmode{{\mathcal V}}\else{${\mathcal V}$}\fi}
\def\X{\ifmmode{{\mathcal X}}\else{${\mathcal X}$}\fi}
\def\Br{\ifmmode{{\mathrm{Br}}}\else{${\mathrm{Br}}$}\fi}
\def\OG{\ifmmode{\widetilde{\cal M}_4}\else{$\widetilde{\cal M}_4$}\fi}
\def\D{\ifmmode{{\mathcal D}^b}\else{${{\mathcal
    D}^b}$}\fi}
\def\Shah{\ifmmode{\amalg\hspace*{-3.5pt}\amalg}\else{$\amalg\hspace*{-3.5pt}\amalg$}\fi}
\begin{document}

\title{A bound on the second Betti number of hyperk{\"a}hler manifolds of complex dimension six\footnote{2010 {\em Mathematics Subject Classification.\/} 53C26.}}
\author{Justin Sawon}
\date{July, 2020}
\maketitle

\begin{abstract}
Let $M$ be an irreducible compact hyperk{\"a}hler manifold of complex dimension six. Under an assumption on the Looijenga-Lunts-Verbitsky decomposition of the cohomology of $M$, we prove that the second Betti number of $M$ is at most $23$.
\end{abstract}

\maketitle

\section{Introduction}

An irreducible hyperk{\"a}hler manifold is a Riemannian manifold of real dimension $4n$ whose holonomy is equal to $\mathrm{Sp}(n)$. The Riemannian metric will be K{\"a}hlerian with respect to an $S^2$-family of complex structures, so henceforth we will always use the {\em complex\/} dimension, $2n$. Beauville~\cite{beauville99} and Guan~\cite{guan01} independently proved that the second Betti number of an irreducible compact hyperk{\"a}hler manifold of dimension four is bounded above by $23$. The Hilbert scheme of two points on a K3 surface has second Betti number $23$, so this bound is sharp.

Looijenga and Lunts~\cite{ll97} and Verbitsky~\cite{verbitsky96} showed that the cohomology of a hyperk{\"a}hler manifold admits an action of $\mathfrak{so}(4,b_2-2)$, where $b_2$ is the second Betti number. In this article we prove that the second Betti number of an irreducible compact hyperk{\"a}hler manifold of dimension six is also bounded above by $23$, under the assumption that only certain irreducible $\mathfrak{so}(4,b_2-2)$-representations appear in the Looijenga-Lunts-Verbitsky decomposition; see Theorem~\ref{six} for the precise statement. Up to deformation, there are currently three known examples of such manifolds: the Hilbert scheme of three points on a K3 surface, the generalized Kummer variety (see Beauville~\cite{beauville83}), and an example of O'Grady~\cite{ogrady03}. These examples have second Betti numbers $23$, $7$, and $8$, respectively, so once again our bound is sharp. Moreover, these examples all satisfy the assumption on the Looijenga-Lunts-Verbitsky decomposition of the cohomology.

Using the same ideas, we prove that the second Betti number of an irreducible compact hyperk{\"a}hler manifold of dimension eight is bounded above by $24$, once again under an assumption on the Looijenga-Lunts-Verbitsky decomposition {\em and\/} assuming that all odd Betti numbers vanish; see Theorem~\ref{eight} for the precise statement. In this dimension, the only examples currently known are the Hilbert scheme of four points on a K3 surface and the generalized Kummer variety, which have second Betti numbers $23$ and $7$, respectively. In particular, it is possible that no example exists in dimension eight with second Betti number $24$.

Why is it important to bound the second Betti number? The first Pontryagin class $p_1(M)$ determines a homogeneous polynomial of degree $2n-2$ on $\H^2(M,\Z)$, given by $\alpha\mapsto \int_M\alpha^{2n-2}p_1(M)$. Huybrechts~\cite{huybrechts03} proved that if the second integral cohomology $\H^2$ and the homogeneous polynomial of degree $2n-2$ on $\H^2$ determined by the first Pontryagin class are fixed, then up to diffeomorphism there are only finitely many irreducible compact hyperk{\"a}hler manifolds of dimension $2n$ realizing this structure. (Instead, one can fix $\H^2$ and a certain normalization $\tilde{q}$ of the Beauville-Bogomolov quadratic form on $\H^2$ and arrive at the same conclusion; see~\cite{huybrechts03}.) A universal bound on the second Betti number in dimension $2n$ would mean that there are finitely many possibilities for $\H^2$ as a $\mathbb{Z}$-module; it would then remain to bound the other data on $\H^2$, to conclude that there are finitely many diffeomorphism types of irreducible compact hyperk{\"a}hler manifolds of that dimension.

The author would like to thank Colleen Robles for pointing out an error in an earlier version of this article: we had overlooked some irreducible representations that could appear in the Looijenga-Lunts-Verbitsky decomposition of the cohomology. The author would also like to thank Nikon Kurnosov for conversations on this work, and the NSF for support through grants DMS-1206309 and DMS-1555206.

\section{Dimension four}

Let us recall how to bound the second Betti number in dimension four. Salamon~\cite{salamon96} proved that the Betti numbers of a compact hyperk{\"a}hler manifold of dimension $2n$ satisfy the relation
$$2\sum_{j=1}^{2n}(-1)^j(3j^2-n)b_{2n-j}=nb_{2n}.$$

\begin{theorem}[Beauville~\cite{beauville99}, Guan~\cite{guan01}]
Let $M$ be an irreducible compact hyperk{\"a}hler manifold of complex dimension four. Then the second Betti number $b_2$ of $M$ is at most $23$.
\end{theorem}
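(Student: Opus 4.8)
The plan is to specialize Salamon's relation to the case $n=2$ (complex dimension four) and combine it with two standard facts about irreducible hyperk\"ahler manifolds together with one deeper input from the structure of the cohomology ring. First I would substitute $n=2$ and $2n=4$ into Salamon's identity, expanding the sum over $j=1,\dots,4$. The coefficients $2(-1)^j(3j^2-2)$ evaluate to $-2,20,-50,92$ for $j=1,2,3,4$, so the relation reads $-2b_3+20b_2-50b_1+92b_0=2b_4$, that is,
\[
b_4 = 10b_2 - b_3 - 25b_1 + 46b_0.
\]
Using that $M$ is connected, so $b_0=1$, and that an irreducible hyperk\"ahler manifold has $h^{1,0}=0$ and hence $b_1=0$, this collapses to the purely linear relation $b_4 = 10b_2 - b_3 + 46$.

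The key additional ingredient is a \emph{quadratic} lower bound on $b_4$ in terms of $b_2$. By Verbitsky's theorem on the cohomology of compact hyperk\"ahler manifolds, the cup-product map $\mathrm{Sym}^k\H^2(M)\to\H^{2k}(M)$ is injective for $k\le n$; for $n=2$ this yields an injection $\mathrm{Sym}^2\H^2(M)\hookrightarrow\H^4(M)$. Comparing dimensions gives
\[
b_4 \;\ge\; \dim\mathrm{Sym}^2\H^2(M) \;=\; \binom{b_2+1}{2} \;=\; \frac{b_2(b_2+1)}{2}.
\]

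To finish, I would play the two displayed facts against each other. Since all Betti numbers are nonnegative, in particular $b_3\ge 0$, the linear relation forces $b_4\le 10b_2+46$. Substituting into the quadratic lower bound produces $\tfrac12 b_2(b_2+1)\le 10b_2+46$, equivalently $b_2^2-19b_2-92\le 0$. This quadratic has roots $-4$ and $23$, so $b_2\le 23$, as claimed. The main obstacle is the Verbitsky injectivity step: Salamon's relation together with $b_0=1$ and $b_1=0$ is by itself far too weak, leaving $b_2$ unbounded. It is precisely the quadratic growth of $b_4$ forced by the embedding of the symmetric square of $\H^2$ into $\H^4$, competing against the merely linear growth permitted by Salamon's relation, that yields a finite bound. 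I would also note that the bound is sharp, being attained by the Hilbert scheme of two points on a K3 surface.
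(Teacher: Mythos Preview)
Your argument is correct and is essentially the paper's proof: both specialize Salamon's relation to $n=2$ with $b_0=1$, $b_1=0$, invoke Verbitsky's injection $\mathrm{Sym}^2\H^2\hookrightarrow\H^4$ to get $b_4\ge\binom{b_2+1}{2}$, and arrive at the inequality $(b_2+4)(b_2-23)\le 0$. The only cosmetic difference is that the paper names the defect $b_4-\binom{b_2+1}{2}=:b_4'$ and writes the conclusion as the equality $-(b_2+4)(b_2-23)=2b_4'+2b_3\ge 0$, whereas you phrase it directly as an inequality.
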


\begin{proof}
Irreducible hyperk{\"a}hler manifolds are simply-connected, so $b_1=0$. Therefore Salamon's relation for $n=2$ gives
$$-2b_3+20b_2+92=2b_4.$$
Verbitsky~\cite{verbitsky90} proved that $\mathrm{Sym}^k\H^2(M,\R)$ injects into $\H^{2k}(M,\R)$ for $k\leq n$. In particular, we can write
$$\H^4(M,\R)\cong \mathrm{Sym}^2\H^2(M,\R)\oplus \H^4_{\mathrm{prim}}(M,\R)$$
and
$$b_4={b_2+1 \choose 2} +b_4^{\prime},$$
where $b_4^{\prime}$ denotes the dimension of the primitive cohomology $\H^4_{\mathrm{prim}}(M,\R)$. Substituting this into Salamon's relation gives
$$-2b_3+20b_2+92=b_2(b_2+1)+2b_4^{\prime},$$
and therefore
$$-(b_2+4)(b_2-23)=-b_2^2+19b_2+92=2b_4^{\prime}+2b_3.$$
The left-hand side is negative if $b_2>23$, whereas the right-hand side is clearly non-negative. Therefore the second Betti number $b_2$ can be at most $23$.
\end{proof}

\begin{example}
Up to deformation, there are two known examples of irreducible compact hyperk{\"a}hler manifolds of dimension four: the Hilbert scheme $\mathrm{Hilb}^2S$ of two points on a K3 surface $S$ (see Fujiki~\cite{fujiki83}) and the generalized Kummer variety $K_2(A)$ of an abelian surface $A$ (see Beauville~\cite{beauville83}). Their Hodge diamonds are
$$\begin{array}{ccccccccc}
 & & & & 1 & & & & \\
 & & & 0 & & 0 & & & \\
 & & 1 & & 21 & & 1 & & \\
 & 0 & & 0 & & 0 & & 0 & \\
 1 & & 21 & & 232 & & 21 & & 1 \\
 & 0 & & 0 & & 0 & & 0 & \\
 & & 1 & & 21 & & 1 & & \\
 & & & 0 & & 0 & & & \\
 & & & & 1 & & & & \\
 \end{array}
 \qquad\qquad\mbox{and}\qquad\qquad
 \begin{array}{ccccccccc}
 & & & & 1 & & & & \\
 & & & 0 & & 0 & & & \\
 & & 1 & & 5 & & 1 & & \\
 & 0 & & 4 & & 4 & & 0 & \\
 1 & & 5 & & 96 & & 5 & & 1 \\
 & 0 & & 4 & & 4 & & 0 & \\
 & & 1 & & 5 & & 1 & & \\
 & & & 0 & & 0 & & & \\
 & & & & 1 & & & & \\
 \end{array},$$
with $b_2=23$, $b_3=0$, $b_4^{\prime}=0$, and $b_2=7$, $b_3=8$, $b_4^{\prime}=80$, respectively. In fact, it follows from the proof above that if $b_2=23$ then $b_3$ and $b_4^{\prime}$ must both vanish.
\end{example}

\section{Dimension six}

In higher dimensions, the injection $\mathrm{Sym}^k\H^2(M,\R)\hookrightarrow \H^{2k}(M,\R)$ is insufficient to produce a bound on the second Betti number. Instead we employ the following refinement.

\begin{theorem}[Looijenga and Lunts~\cite{ll97}, Verbitsky~\cite{verbitsky96}]
Let $M$ be an irreducible compact hyperk{\"a}hler manifold of dimension $2n$ with second Betti number $b_2$. Then there is an action of $\mathfrak{so}(4,b_2-2)$ on the real cohomology $\bigoplus_{k=0}^{4n}\H^k(M,\R)$, and hence of $\mathfrak{so}(b_2+2,\C)$ on the complex cohomology $\bigoplus_{k=0}^{4n}\H^k(M,\C)$.
\end{theorem}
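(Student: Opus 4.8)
The plan is to assemble the action out of Lefschetz $\mathfrak{sl}_2$-triples, one for each K\"ahler class, and then to analyse the Lie algebra they generate. Recall that the hard Lefschetz theorem asserts that for a K\"ahler class $\omega$ the cup-product operator $L_\omega\colon\H^k(M,\R)\to\H^{k+2}(M,\R)$ satisfies $L_\omega^j\colon\H^{2n-j}(M,\R)\xrightarrow{\sim}\H^{2n+j}(M,\R)$ for all $j$. Equivalently, together with the grading operator $H$ (acting as multiplication by $k-2n$ on $\H^k$, not to be confused with the cohomology itself) and a dual operator $\Lambda_\omega$ lowering degree by two, one obtains an $\mathfrak{sl}_2$-triple $(L_\omega,H,\Lambda_\omega)$ acting on $\bigoplus_k\H^k(M,\R)$. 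First I would let $\mathfrak{g}=\mathfrak{g}(M)$ be the Lie subalgebra of $\mathfrak{gl}\bigl(\bigoplus_k\H^k(M,\R)\bigr)$ generated by all of these triples as $\omega$ ranges over the K\"ahler classes of all the complex structures in the hyperk\"ahler family. Since the cohomology is finite-dimensional, $\mathfrak{g}$ is a finite-dimensional Lie algebra of endomorphisms of the real cohomology.

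The heart of the argument is to determine the structure of $\mathfrak{g}$. Since every generator shifts cohomological degree by $-2$, $0$, or $+2$, the adjoint action of $H$ grades $\mathfrak{g}$, and I would next prove, following Looijenga and Lunts, that $\mathfrak{g}$ is concentrated in these three weights, $\mathfrak{g}=\mathfrak{g}^{-2}\oplus\mathfrak{g}^{0}\oplus\mathfrak{g}^{+2}$, with $\mathfrak{g}^{\pm2}$ abelian (the $L_\omega$ commute because cup product of degree-two classes is commutative). This is the key structural lemma, and it rests on the representation theory of the individual $\mathfrak{sl}_2$'s, which forbids the brackets from producing operators shifting degree by four or more. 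Because the K\"ahler classes of the various complex structures span $\H^2(M,\R)$, the assignment $\omega\mapsto L_\omega$ identifies $\mathfrak{g}^{+2}\cong\H^2(M,\R)$, and dually $\mathfrak{g}^{-2}\cong\H^2(M,\R)^*$. The bracket $[\mathfrak{g}^{+2},\mathfrak{g}^{-2}]$ lands in $\mathfrak{g}^0$, and writing $[L_\alpha,\Lambda_\beta]=\langle\alpha,\beta\rangle H+(\text{traceless part})$ defines a symmetric bilinear form $\langle\,,\,\rangle$ on $\H^2(M,\R)$; one then checks that $\mathfrak{g}^0=\R H\oplus\mathfrak{so}(\langle\,,\,\rangle)$ and that $\mathfrak{g}$ is semisimple.

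It remains to identify the form $\langle\,,\,\rangle$ and its signature, which is the genuinely hyperk\"ahler input due to Verbitsky. Here I would use the underlying quaternionic structure: the three K\"ahler forms $\omega_I,\omega_J,\omega_K$ yield three Lefschetz triples whose mutual brackets can be computed explicitly (they generate a copy of $\mathfrak{so}(4,1)$), and this computation identifies $\langle\,,\,\rangle$, up to a positive scalar, with the Beauville--Bogomolov quadratic form $q$ on $\H^2(M,\R)$. Since $q$ is positive on the three-dimensional span of $\omega_I,\omega_J,\omega_K$ and negative-definite on its $q$-orthogonal complement, it has signature $(3,b_2-3)$. The three-graded structure of $\mathfrak{g}$, that is, the presence of the grading operator $H$, contributes a hyperbolic plane $U$ of signature $(1,1)$ to the standard representation $\H^2(M,\R)\oplus U$ of $\mathfrak{g}$, with $H$ spanning $\mathfrak{so}(U)$. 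Hence the total form has signature $(4,b_2-2)$ and $\mathfrak{g}\cong\mathfrak{so}(4,b_2-2)$. Complexifying the action turns the signature into the total dimension, giving $\mathfrak{g}_\C\cong\mathfrak{so}(b_2+2,\C)$ acting on $\bigoplus_k\H^k(M,\C)$.

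The step I expect to be the main obstacle is the structural lemma that $\mathfrak{g}$ is concentrated in weights $-2,0,+2$ and is semisimple, together with the accompanying identification $\mathfrak{g}^0=\R H\oplus\mathfrak{so}(\langle\,,\,\rangle)$; once this is in hand, pinning down the signature is a matter of exhibiting the three positive directions $\omega_I,\omega_J,\omega_K$ through Verbitsky's bracket computation and recognising the hyperbolic plane supplied by the grading.
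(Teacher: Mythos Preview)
The paper does not actually prove this theorem; it quotes the result from Verbitsky and Looijenga--Lunts and offers only a one-sentence remark that the action is generated by the Lefschetz operators $L_{[\omega]}$ and $\Lambda_{[\omega]}$ for the various K\"ahler classes. Your proposal is a correct and faithful expansion of precisely that remark into the actual argument of \cite{verbitsky96} and \cite{ll97}: the structural lemma $\mathfrak{g}=\mathfrak{g}^{-2}\oplus\mathfrak{g}^{0}\oplus\mathfrak{g}^{+2}$ with $\mathfrak{g}^0=\R H\oplus\mathfrak{so}(\langle\,,\,\rangle)$ is Looijenga--Lunts, and the identification of $\langle\,,\,\rangle$ with the Beauville--Bogomolov form of signature $(3,b_2-3)$ is Verbitsky's hyperk\"ahler input, so there is nothing to compare beyond noting that you have supplied what the paper deliberately outsourced.
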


\begin{remark}
This action is generated by Lefschetz operators: for each K{\"a}hler class $[\omega]$ the operators $L_{[\omega]}$ and $\Lambda_{[\omega]}$ generate an $\mathfrak{sl}(2,\C)$-action on the complex cohomology, and the amalgamation of all these actions yields the $\mathfrak{so}(b_2+2,\C)$-action.
\end{remark}

We can decompose $\bigoplus_{k=0}^{4n}\H^k(M,\C)$ into irreducible representations for this $\mathfrak{so}(b_2+2,\C)$-action. Their highest weights are related to Hodge bi-degrees; indeed, the Hodge diamond is the projection onto a plane of the (higher-dimensional) weight lattice of $\mathfrak{so}(b_2+2,\C)$. We can choose positive roots so that the dominant Weyl chamber projects onto the shaded octant of the Hodge diamond shown in Figure~\ref{diamond6}.

\begin{figure}[ht]
\begin{center}
\begin{tikzpicture}[scale=0.5]
\draw[fill=blue!20!white]  (0,7) -- (0,0) -- (-3.5,3.5);
\foreach \i in {0,...,6}
\foreach \j in {0,...,6}
\draw[fill] (\i-\j,6-\i-\j) circle (0.1cm);
\draw[thick,red] (-1,3) circle (0.25cm);
\draw[thick,red] (-2,2) circle (0.25cm);
\draw[thick,red] (0,2) circle (0.25cm);
\draw[thick,red] (-1,1) circle (0.25cm);
\draw[thick,red] (0,0) circle (0.25cm);
\end{tikzpicture}
\caption{The Hodge diamond in dimension six}
\label{diamond6}
\end{center}
\end{figure}

The irreducible representation with highest weight vector $1\in\H^0(M,\C)$ is precisely the subring of the cohomology generated by $\H^2(M,\C)$. In dimension six, the remainder of the cohomology comes from irreducible representations whose highest weight vectors lie in the Hodge groups that are circled in Figure~\ref{diamond6}. By considering all irreducible representations of $\mathfrak{so}(b_2+2,\C)$ (see Fulton and Harris~\cite{fh91}), and observing how their highest weights project to the Hodge diamond, we conclude that the only irreducible representations that could appear are those described in Table~\ref{modules6}. In the second column of this table the highest weights are given in terms of the fundamental weights. In the fourth column $\C^{b_2+2}$ and $\C$ denote the standard and trivial representations of $\mathfrak{so}(b_2+2,\C)$, respectively. Thus $V_k$ is the representation given by the $k$th exterior power $\Lambda^k\C^{b_2+2}$ of the standard representation of $\mathfrak{so}(b_2+2,\C)$. Not shown in the table is that when $b_2+2=2m+1$ is odd the largest exterior power $\Lambda^m\C^{b_2+2}$ has highest weight $2\omega_m$. In addition, when $b_2+2=2m$ is even the exterior power $\Lambda^{m-1}\C^{b_2+2}$ has highest weight $\omega_{m-1}+\omega_m$ while the middle degree exterior power $\Lambda^m\C^{b_2+2}$ is not irreducible; instead it decomposes into two irreducible representations of equal dimensions, $\Lambda^m\C^{b_2+2}=\Lambda^m_+\C^{b_2+2}\oplus\Lambda^m_-\C^{b_2+2}$, with highest weights $2\omega_{m-1}$ and $2\omega_m$, respectively. Note that tensor representations occur in even degrees in cohomology, while half-spin representations occur in odd degrees. The highest weights and dimensions of the latter are not needed for our arguments.

\begin{table}[ht]
\begin{center}
\begin{tabular}{|c|c|c|c|c|c|}
  \hline
   & highest weight & highest weight vector in & $\mathfrak{so}(b_2+2,\C)$-module & dimension \\
  \hline
  $U_{\bullet}$ & & $\H^{2,1}(M)$ & half-spin representations & \\
  $V_1$ & $\omega_1$ & $\H^{2,2}(M)$ & $\C^{b_2+2}$ & $b_2+2$ \\
  $V_2$ & $\omega_2$ & $\H^{3,1}(M)$ & $\Lambda^2\C^{b_2+2}$ & $b_2+2 \choose 2$ \\
  $V_3$ & $\omega_3$ & $\H^{3,1}(M)$ & $\Lambda^3\C^{b_2+2}$ & $b_2+2 \choose 3$ \\
  $V_4$ & $\omega_4$ & $\H^{3,1}(M)$ & $\Lambda^4\C^{b_2+2}$ & $b_2+2 \choose 4$ \\
  $\vdots$ & $\vdots$ & $\vdots$ & $\vdots$ & $\vdots$ \\
  $V_k$ & $\omega_k$ & $\H^{3,1}(M)$ & $\Lambda^k\C^{b_2+2}$ & $b_2+2 \choose k$ \\
  $\vdots$ & $\vdots$ & $\vdots$ & $\vdots$ & $\vdots$ \\  
  $W_{\bullet}$ & & $\H^{3,2}(M)$ & half-spin representations & \\
  $T$ & $0$ & $\H^{3,3}(M)$ & $\C$ & $1$ \\
  \hline
\end{tabular}
\end{center}
\caption{Irreducible representations of $\mathfrak{so}(b_2+2,\C)$ that could occur in the cohomology of $M$}
\label{modules6}
\end{table}

\begin{example}
We can calculate the dimensions of the weight spaces of these representations. The highest weight vector of $V_1$ lies in $\H^{2,2}(M)$. Acting on this with Lefschetz operators $L_{[\omega]}$ gives us classes in $\H^{4,2}(M)$, $\H^{3,3}(M)$, $\H^{2,4}(M)$, and $\H^{4,4}(M)$, and indeed we find that $V_1$ will sit inside the Hodge diamond in the following manner (where we have indicated the dimension of $V_1^{p,q}$ for each $p,q$)
$$\begin{array}{ccccccccccccc}
 & & & & & & 0 & & & & & & \\
 & & & & & 0 & & 0 & & & & & \\
 & & & & 0 & & 0 & & 0 & & & & \\
 & & & 0 & & 0 & & 0 & & 0 & & & \\
 & & 0 & & 0 & & 1 & & 0 & & 0 & & \\
 & 0 & & 0 & & 0 & & 0 & & 0 & & 0 & \\
0 & & 0 & & 1 & & b_2-2 & & 1 & & 0 & & 0 \\
 & 0 & & 0 & & 0 & & 0 & & 0 & & 0 & \\
 & & 0 & & 0 & & 1 & & 0 & & 0 & & \\
 & & & 0 & & 0 & & 0 & & 0 & & & \\
 & & & & 0 & & 0 & & 0 & & & & \\
 & & & & & 0 & & 0 & & & & & \\
 & & & & & & 0 & & & & & & \\
\end{array}.$$
The weight decomposition of the exterior power $V_k=\Lambda^kV_1$ for $k\geq 2$ is derived from the weight decomposition of $V_1$. Because the Hodge bi-degrees are derived from the weights, we can thereby determine the dimensions of the $V_k^{p.q}$s. For example,
$$\mathrm{dim}V_2^{2,2}=\mathrm{dim}V_1^{2,2}\mathrm{dim}V_1^{3,3}.$$
(Note that weight zero corresponds to Hodge bi-degree $(3,3)$; if we shift by $(3,3)$ then the bi-degrees would become additive.) We find that $V_2$ will sit inside the Hodge diamond as
$$\begin{array}{ccccccccccccc}
 & & & & & & 0 & & & & & & \\
 & & & & & 0 & & 0 & & & & & \\
 & & & & 0 & & 0 & & 0 & & & & \\
 & & & 0 & & 0 & & 0 & & 0 & & & \\
 & & 0 & & 1 & & b_2-2 & & 1 & & 0 & & \\
 & 0 & & 0 & & 0 & & 0 & & 0 & & 0 & \\
0 & & 0 & & b_2-2 & & {b_2-2 \choose 2}+2 & & b_2-2 & & 0 & & 0 \\
 & 0 & & 0 & & 0 & & 0 & & 0 & & 0 & \\
 & & 0 & & 1 & & b_2-2 & & 1 & & 0 & & \\
 & & & 0 & & 0 & & 0 & & 0 & & & \\
 & & & & 0 & & 0 & & 0 & & & & \\
 & & & & & 0 & & 0 & & & & & \\
 & & & & & & 0 & & & & & & \\
 \end{array},$$
$V_3$ will sit inside the Hodge diamond as 
$$\begin{array}{ccccccccccccc}
 & & & & & & 0 & & & & & & \\
 & & & & & 0 & & 0 & & & & & \\
 & & & & 0 & & 0 & & 0 & & & & \\
 & & & 0 & & 0 & & 0 & & 0 & & & \\
 & & 0 & & b_2-2 & & {b_2-2 \choose 2}+1 & & b_2-2 & & 0 & & \\
 & 0 & & 0 & & 0 & & 0 & & 0 & & 0 & \\
0 & & 0 & & {b_2-2 \choose 2}+1 & & {b_2-2 \choose 3}+2(b_2-2) & & {b_2-2 \choose 2}+1 & & 0 & & 0 \\
 & 0 & & 0 & & 0 & & 0 & & 0 & & 0 & \\
 & & 0 & & b_2-2 & & {b_2-2 \choose 2}+1 & & b_2-2 & & 0 & & \\
 & & & 0 & & 0 & & 0 & & 0 & & & \\
 & & & & 0 & & 0 & & 0 & & & & \\
 & & & & & 0 & & 0 & & & & & \\
 & & & & & & 0 & & & & & & \\
 \end{array},$$
and $V_4$ will sit inside the Hodge diamond as
$$\begin{array}{ccccccccccccc}
 & & & & & & 0 & & & & & & \\
 & & & & & 0 & & 0 & & & & & \\
 & & & & 0 & & 0 & & 0 & & & & \\
 & & & 0 & & 0 & & 0 & & 0 & & & \\
 & & 0 & & {b_2-2 \choose 2} & & {b_2-2 \choose 3}+b_2-2 & & {b_2-2 \choose 2} & & 0 & & \\
 & 0 & & 0 & & 0 & & 0 & & 0 & & 0 & \\
0 & & 0 & & {b_2-2 \choose 3}+b_2-2 & & {b_2-2 \choose 4}+2{b_2-2 \choose 2}+1 & & {b_2-2 \choose 3}+b_2-2 & & 0 & & 0 \\
 & 0 & & 0 & & 0 & & 0 & & 0 & & 0 & \\
 & & 0 & & {b_2-2 \choose 2} & & {b_2-2 \choose 3}+b_2-2 & & {b_2-2 \choose 2} & & 0 & & \\
 & & & 0 & & 0 & & 0 & & 0 & & & \\
 & & & & 0 & & 0 & & 0 & & & & \\
 & & & & & 0 & & 0 & & & & & \\
 & & & & & & 0 & & & & & & \\
 \end{array}.$$
Table~\ref{dimensions6} gives the dimensions of the intersections of these representations with $\H^4(M,\C)$ and $\H^6(M,\C)$.
\end{example}

\begin{table}[ht]
\begin{center}
\begin{tabular}{|c|c|c|c|c|}
  \hline
   & dimension & dimension of $V_{\bullet}\cap\H^4(M,\C)$ & dimension of $V_{\bullet}\cap\H^6(M,\C)$ \\
  \hline
  $V_1$ & $b_2+2$ & ${b_2 \choose 0}=1$ & ${b_2 \choose 1}=b_2$ \\
  $V_2$ & $b_2+2 \choose 2$ & ${b_2 \choose 1}=b_2$ & ${b_2 \choose 2}+{b_2 \choose 0}=\frac{b_2^2-b_2+2}{2}$ \\
  $V_3$ & $b_2+2 \choose 3$ & ${b_2 \choose 2}=\frac{b_2(b_2-1)}{2}$ & ${b_2 \choose 3}+{b_2 \choose 1}=\frac{b_2(b_2^2-3b_2+8)}{6}$ \\
  $V_4$ & $b_2+2 \choose 4$ & ${b_2 \choose 3}=\frac{b_2(b_2-1)(b_2-2)}{6}$ & ${b_2 \choose 4}+{b_2 \choose 2}=\frac{b_2(b_2-1)(b_2^2-5b_2+18)}{24}$ \\
  $\vdots$ & $\vdots$ & $\vdots$ & $\vdots$ \\
  $V_k$ & $b_2+2 \choose k$ & ${b_2 \choose k-1}$ & ${b_2 \choose k}+{b_2 \choose k-2}$ \\
  & & & \\
  \hline
\end{tabular}
\end{center}
\caption{Dimensions of $V_{\bullet}$ in degrees $4$ and $6$}
\label{dimensions6}
\end{table}

With these preliminaries out of the way, we can prove our main result.

\begin{theorem}
\label{six}
Let $M$ be an irreducible compact hyperk{\"a}hler manifold of complex dimension six. Of the possible irreducible representations of $\mathfrak{so}(b_2+2,\C)$ with highest weight vectors in $\H^{2,2}(M)$ and $\H^{3,1}(M)$ in the Looijenga-Lunts-Verbitsky decomposition of the cohomology of $M$, assume that only $V_1$, $V_2$, and $V_3$ can appear (i.e., assume that $V_4$, $V_5$, $\ldots$ do not appear). Then the second Betti number $b_2$ of $M$ is at most $23$.
\end{theorem}

\begin{proof}
When $n=3$ Salamon's relation gives
$$18b_4-48b_3+90b_2+210=3b_6.$$
Decompose the complex cohomology of $M$ into irreducible representations of $\mathfrak{so}(b_2+2,\C)$, as above. Suppose that $V_1$ occurs with multiplicity $c$, $V_2$ occurs with multiplicity $d$, $V_3$ occurs with multiplicity $e$, and the trivial representation $T=\C$ occurs with multiplicity $f$. The contributions of $V_1$, $V_2$, and $V_3$ to $\H^4(M,\C)$ are of dimensions $1$, $b_2$, and $\frac{b_2(b_2-1)}{2}$, respectively. Including $\mathrm{Sym}^2\H^2(M,\C)$ and multiplicities, we deduce that
$$b_4={b_2+1\choose 2}+c+db_2+e\left(\frac{b_2(b_2-1)}{2}\right).$$
Similarly, the contributions of $V_1$, $V_2$, $V_3$, and $T=\C$ to $\H^6(M,\C)$ are of dimensions $b_2$, $\frac{b_2^2-b_2+2}{2}$, $\frac{b_2(b_2^2-3b_2+8)}{6}$, and $1$, respectively. Including $\mathrm{Sym}^3\H^2(M,\C)$ and multiplicities, we deduce that
$$b_6={b_2+2\choose 3}+cb_2+d\left(\frac{b_2^2-b_2+2}{2}\right)+e\left(\frac{b_2(b_2^2-3b_2+8)}{6}\right)+f.$$
Substituting the formulae for $b_4$ and $b_6$ into Salamon's relation (and multiplying by $2$) gives
$$36\left({b_2+1\choose 2}+c+db_2+e\left(\frac{b_2(b_2-1)}{2}\right)\right)-96b_3+180b_2+420\hspace*{55mm}$$
$$\hspace*{35mm}=6b_6=6\left({b_2+2\choose 3}+cb_2+d\left(\frac{b_2^2-b_2+2}{2}\right)+e\left(\frac{b_2(b_2^2-3b_2+8)}{6}\right)+f\right),$$
and after simplifying and rearranging we obtain
$$-(b_2+6)\left(b_2-\frac{21+\sqrt{721}}{2}\right)\left(b_2-\frac{21-\sqrt{721}}{2}\right) = -b_2^3+15b_2^2+196b_2+420\hspace*{35mm}$$
$$\hspace*{50mm}=6c(b_2-6)+3d(b_2^2-13b_2+2)+eb_2(b_2^2-21b_2+26)+6f+96b_3.$$
The left-hand side is negative if $b_2\geq 24>\frac{21+\sqrt{721}}{2}\approx 23.9257$. On the other hand, $c$, $d$, $e$, $f$, and $b_3$ are all non-negative, so the right-hand side will be non-negative for $b_2\geq 24$ (indeed, for $b_2\geq 20$). Therefore the second Betti number $b_2$ can be at most $23$.
\end{proof}

\begin{remark}
The contributions of $V_4$ to $\H^4(M,\C)$ and $\H^6(M,\C)$ have dimensions
$$\frac{b_2(b_2-1)(b_2-2)}{6}\qquad\mbox{and}\qquad\frac{b_2(b_2-1)(b_2^2-5b_2+18)}{24},$$
respectively. For each occurrence of $V_4$ in the decomposition of the cohomology of $M$ we would need to add an additional term
$$6\frac{b_2(b_2-1)(b_2^2-5b_2+18)}{24}-36\frac{b_2(b_2-1)(b_2-2)}{6}=\frac{b_2(b_2-1)(b_2^2-29b_2+66)}{4}$$
to the right-hand side of the last displayed equation of the proof. If $b_2=24$, $25$, or $26$ then this term would be negative and the proof would break down. However, we could still conclude that $b_2\geq 27$ is impossible.

More generally, the contributions of $V_k$ to $\H^4(M,\C)$ and $\H^6(M,\C)$ have dimensions
$${b_2 \choose k-1}\qquad\mbox{and}\qquad {b_2 \choose k}+{b_2 \choose k-2},$$
respectively. For each occurrence of $V_k$ in the decomposition of the cohomology of $M$ we would need to add an additional term
$$6{b_2 \choose k}+6{b_2 \choose k-2}-36{b_2 \choose k-1}=\frac{6b_2(b_2-1)\cdots (b_2-k+3)}{k!}(b_2^2-(8k-3)b_2+(8k^2-16k+2))$$
to the right-hand side of the last displayed equation of the proof. Calculating the roots of the quadratic factor, we see that if
$$b_2\geq \frac{8k-3+\sqrt{32k^2+16k+1}}{2}$$
then this additional term will be non-negative, and we again reach the desired contradiction. Thus allowing $V_1$, $V_2$, $\ldots$, $V_k$ to appear in the Looijenga-Lunts-Verbitsky decomposition of the cohomology of $M$, for some $k\geq 4$, we still obtain an upper bound on $b_2$, but unfortunately this bound grows roughly linearly with $k$.
\end{remark}

\begin{example}
Up to deformation, there are three known examples of irreducible compact hyperk{\"a}hler manifolds of dimension six: the Hilbert scheme $\mathrm{Hilb}^3S$ of three points on a K3 surface $S$, the generalized Kummer variety $K_3(A)$ of an abelian surface $A$ (see Beauville~\cite{beauville83}), and an example $M_6$ of O'Grady~\cite{ogrady03}. The Hodge numbers of Hilbert schemes of points on K3 surfaces and of generalized Kummer varieties were calculated by G{\"o}ttsche and Soergel~\cite{gs93}; for $\mathrm{Hilb}^3S$ and $K_3(A)$ they are
\begin{center}
\scalebox{0.85}{
$\begin{array}{ccccccccccccc}
 & & & & & & 1 & & & & & & \\
 & & & & & 0 & & 0 & & & & & \\
 & & & & 1 & & 21 & & 1 & & & & \\
 & & & 0 & & 0 & & 0 & & 0 & & & \\
 & & 1 & & 22 & & 253 & & 22 & & 1 & & \\
 & 0 & & 0 & & 0 & & 0 & & 0 & & 0 & \\
1 & & 21 & & 253 & & 2004 & & 253 & & 21 & & 1 \\
 & 0 & & 0 & & 0 & & 0 & & 0 & & 0 & \\
 & & 1 & & 22 & & 253 & & 22 & & 1 & & \\
 & & & 0 & & 0 & & 0 & & 0 & & & \\
 & & & & 1 & & 21 & & 1 & & & & \\
 & & & & & 0 & & 0 & & & & & \\
 & & & & & & 1 & & & & & & \\
 \end{array}
 \qquad\mbox{and}\qquad
 \begin{array}{ccccccccccccc}
 & & & & & & 1 & & & & & & \\
 & & & & & 0 & & 0 & & & & & \\
 & & & & 1 & & 5 & & 1 & & & & \\
 & & & 0 & & 4 & & 4 & & 0 & & & \\
 & & 1 & & 6 & & 37 & & 6 & & 1 & & \\
 & 0 & & 4 & & 24 & & 24 & & 4 & & 0 & \\
1 & & 5 & & 37 & & 372 & & 37 & & 5 & & 1 \\
 & 0 & & 4 & & 24 & & 24 & & 4 & & 0 & \\
 & & 1 & & 6 & & 37 & & 6 & & 1 & & \\
 & & & 0 & & 4 & & 4 & & 0 & & & \\
 & & & & 1 & & 5 & & 1 & & & & \\
 & & & & & 0 & & 0 & & & & & \\
 & & & & & & 1 & & & & & & \\
 \end{array},$
}
\end{center}
with $b_2=23$, $b_3=0$, $d=1$, $c=e=f=0$, and $b_2=7$, $b_3=8$, $c=16$, $d=1$, $e=0$, $f=240$, respectively. In particular, $V_4$, $V_5$, $\ldots$ do not appear in the Looijenga-Lunts-Verbitsky decomposition of the cohomology of $\mathrm{Hilb}^3S$ and $K_3(A)$. Indeed, $e=0$ for both, so $V_3$ also does not appear.

The Hodge numbers of O'Grady's example $M_6$ were calculated by Mongardi, Rapagnetta, and Sacc{\`a}~\cite{mrs18}; they are
\begin{center}
\scalebox{0.95}{
$\begin{array}{ccccccccccccc}
 & & & & & & 1 & & & & & & \\
 & & & & & 0 & & 0 & & & & & \\
 & & & & 1 & & 6 & & 1 & & & & \\
 & & & 0 & & 0 & & 0 & & 0 & & & \\
 & & 1 & & 12 & & 173 & & 12 & & 1 & & \\
 & 0 & & 0 & & 0 & & 0 & & 0 & & 0 & \\
1 & & 6 & & 173 & & 1144 & & 173 & & 6 & & 1 \\
 & 0 & & 0 & & 0 & & 0 & & 0 & & 0 & \\
 & & 1 & & 12 & & 173 & & 12 & & 1 & & \\
 & & & 0 & & 0 & & 0 & & 0 & & & \\
 & & & & 1 & & 6 & & 1 & & & & \\
 & & & & & 0 & & 0 & & & & & \\
 & & & & & & 1 & & & & & & \\
 \end{array},$
}
\end{center}
with $b_2=8$, $b_3=0$. A priori there are two different Looijenga-Lunts-Verbitsky decompositions into irreducible $\mathfrak{so}(10,\C)$-representations that could produce this Hodge diamond: either $c=115$, $d=6$, $e=0$, $f=290$ or $c=135$, $d=0$, $e=1$, $f=240$. (Note that in neither case do $V_4$, $V_5$, $\ldots$ appear.) In fact, Green, Kim, Laza, and Robles~\cite{gklr19} have determined that the latter is the correct decomposition, but this is not immediate from the representation theory and it requires geometric arguments.
\end{example}

\begin{remark}
The hypothesis of Theorem~\ref{six} that $V_4$, $V_5$, $\ldots$ do not appear in the decomposition of the cohomology of $M$ was originally introduced somewhat artificially, to make the proof work. Indeed the remark above shows that allowing $V_4$, $V_5$, $\ldots$ to appear leads to progressively weaker bounds on $b_2$. Nevertheless, the hypothesis is satisfied for all known examples in dimension six, as observed above.

A more conceptual justification of the hypothesis is provided by Green, Kim, Laza, and Robles~\cite{gklr19}, by relating it to a conjecture of Nagai~\cite{nagai08} concerning monodromy operators for one-parameter degenerations of hyperk{\"a}hler manifolds. Green, et al.\ show that Nagai's conjecture is equivalent to a certain restriction on the highest weight vectors in the Looijenga-Lunts-Verbitsky decomposition of the cohomology in even degrees, and they verify that this restriction (and hence Nagai's conjecture) holds for all known hyperk{\"a}hler manifolds, in all dimensions. They then recognize that there is a stronger restriction on the highest weight vectors that is more natural, which also holds for all known hyperk{\"a}hler manifolds. This (conjectural) stronger restriction reduces to our hypothesis in dimension six. See~\cite{gklr19} for details.
\end{remark}

\section{Higher dimensions}

When $n=4$ Salamon's relation gives
$$2b_7+16b_6-46b_5+88b_4-142b_3+208b_2+376=4b_8.$$
Thus in dimension eight, $b_7$ appears with a coefficient of the `wrong' sign, and we cannot simply imitate the proof of Theorem~\ref{six}. To proceed, we will assume that $b_7=0$. In fact, this is equivalent to assuming that {\em all\/} odd Betti numbers vanish, as the presence of cohomology in any odd degree will force $\H^7(M,\C)$ to be non-vanishing because of the $\mathfrak{so}(b_2+2,\C)$-action. 

The Hodge diamond omitting the odd cohomology is shown in Figure~\ref{diamond8}. After removing the cohomology generated by $\H^2(M,\C)$, we are left with irreducible representations whose highest weight vectors lie in the circled Hodge groups. The only irreducible representations that could appear are those described in Table~\ref{modules8}. Like in dimension six, $V_k$ denotes that $k$th exterior power $\Lambda^k\C^{b_2+2}$ of the standard representation, while $U_1$ is given by taking the 2nd symmetric power $\mathrm{Sym}^2\C^{b_2+2}$ of the standard representation and removing the trivial direct summand $\C$, and $U_k$ is given by taking the tensor product $\C^{b_2+2}\otimes\Lambda^k\C^{b_2+2}$ of the standard representation with its $k$th exterior power and removing the direct summands $\Lambda^{k+1}\C^{b_2+2}$ and $\Lambda^{k-1}\C^{b_2+2}$ (this leaves an irreducible representation with highest weight $\omega_1+\omega_k$, except in a few special cases described in the following remark).

\begin{figure}[ht]
\begin{center}
\begin{tikzpicture}[scale=0.4]
\draw[fill=blue!20!white]  (0,9) -- (0,0) -- (-4.5,4.5);
\foreach \i in {0,...,4}
\foreach \j in {0,...,4}
\draw[fill] (2*\i-2*\j,8-2*\i-2*\j) circle (0.1cm);
\foreach \i in {0,...,3}
\foreach \j in {0,...,3}
\draw[fill] (2*\i-2*\j,6-2*\i-2*\j) circle (0.1cm);
\draw[thick,red] (-2,4) circle (0.25cm);
\draw[thick,red] (0,4) circle (0.25cm);
\draw[thick,red] (-2,2) circle (0.25cm);
\draw[thick,red] (0,2) circle (0.25cm);
\draw[thick,red] (0,0) circle (0.25cm);
\end{tikzpicture}
\caption{The Hodge diamond in dimension eight}
\label{diamond8}
\end{center}
\end{figure}

\begin{table}[ht]
\begin{center}
\scalebox{0.79}{
\begin{tabular}{|c|c|c|c|c|c|}
  \hline
   & highest weight & highest weight vector in & $\mathfrak{so}(b_2+2,\C)$-module & dimension \\
  \hline
  $U_1$ & $2\omega_1$ & $\H^{2,2}(M)$ & $\mathrm{Sym}^2\C^{b_2+2}-\C$ & $\frac{(b_2+4)(b_2+1)}{2}$ \\
  $U_2$ & $\omega_1+\omega_2$ & $\H^{3,1}(M)$ & $\C^{b_2+2}\otimes\Lambda^2\C^{b_2+2}-\Lambda^3\C^{b_2+2}-\C^{b_2+2}$ & $\frac{(b_2+4)(b_2+2)b_2}{3}$ \\
  $U_3$ & $\omega_1+\omega_3$ & $\H^{3,1}(M)$ & $\C^{b_2+2}\otimes\Lambda^3\C^{b_2+2}-\Lambda^4\C^{b_2+2}-\Lambda^2\C^{b_2+2}$ & $\frac{(b_2+4)(b_2+2)(b_2+1)(b_2-1)}{8}$ \\
  $\vdots$ & $\vdots$ & $\vdots$ & $\vdots$ & $\vdots$ \\
  $U_k$ & $\omega_1+\omega_k$ & $\H^{3,1}(M)$ & $\C^{b_2+2}\otimes\Lambda^k\C^{b_2+2}-\Lambda^{k+1}\C^{b_2+2}-\Lambda^{k-1}\C^{b_2+2}$ & $(b_2+2){b_2+2 \choose k}-{b_2+2 \choose k+1}-{b_2+2 \choose k-1}$ \\
  $\vdots$ & $\vdots$ & $\vdots$ & $\vdots$ & $\vdots$ \\
  $V_1$ & $\omega_1$ & $\H^{3,3}(M)$ & $\C^{b_2+2}$ & $b_2+2$ \\
  $V_2$ & $\omega_2$ & $\H^{4,2}(M)$ & $\Lambda^2\C^{b_2+2}$ & $b_2+2 \choose 2$ \\
  $V_3$ & $\omega_3$ & $\H^{4,2}(M)$ & $\Lambda^3\C^{b_2+2}$ & $b_2+2 \choose 3$ \\
  $V_4$ & $\omega_4$ & $\H^{4,2}(M)$ & $\Lambda^4\C^{b_2+2}$ & $b_2+2 \choose 4$ \\
  $\vdots$ & $\vdots$ & $\vdots$ & $\vdots$ & $\vdots$ \\
  $V_k$ & $\omega_k$ & $\H^{4,2}(M)$ & $\Lambda^k\C^{b_2+2}$ & $b_2+2 \choose k$ \\
  $\vdots$ & $\vdots$ & $\vdots$ & $\vdots$ & $\vdots$ \\
  $T$ & $0$ & $\H^{4,4}(M)$ & $\C$ & $1$ \\
  \hline
\end{tabular}}
\end{center}
\caption{Irreducible representations of $\mathfrak{so}(b_2+2,\C)$ that that could occur in the cohomology of $M$}
\label{modules8}
\end{table}

\begin{remark}
In Theorem~\ref{eight} we will only allow irreducible representations $U_k$ and $V_k$ with $k$ small relative to $b_2$. Nevertheless, for completeness let us clarify that if $b_2+2=2m+1$ is odd then $V_m=\Lambda^m\C^{b_2+2}$ has highest weight $2\omega_m$ and $U_m$ has highest weight $\omega_1+2\omega_m$. If $b_2+2=2m$ is even then $V_{m-1}=\Lambda^{m-1}\C^{b_2+2}$ has highest weight $\omega_{m-1}+\omega_m$, $U_{m-1}$ has highest weight $\omega_1+\omega_{m-1}+\omega_m$, $\Lambda^m\C^{b_2+2}=\Lambda^m_+\C^{b_2+2}\oplus\Lambda^m_-\C^{b_2+2}$ decomposes into two irreducible representations of equal dimensions with highest weights $2\omega_{m-1}$ and $2\omega_m$, and $\C^{b_2+2}\otimes\Lambda^m\C^{b_2+2}-\Lambda^{m+1}\C^{b_2+2}-\Lambda^{m-1}\C^{b_2+2}$ decomposes into two irreducible representations of equal dimensions with highest weights $\omega_1+2\omega_{m-1}$ and $\omega_1+2\omega_m$.
\end{remark}

Table~\ref{dimensions8} gives the dimensions of the intersections of these representations with $\H^4(M,\C)$, $\H^6(M,\C)$, and $\H^8(M,\C)$. For instance, to compute these dimensions for $U_k$ we use the description
$$U_k=\C^{b_2+2}\otimes\Lambda^k\C^{b_2+2}-\Lambda^{k+1}\C^{b_2+2}-\Lambda^{k-1}\C^{b_2+2}$$
with $\C^{b_2+2}=\C\oplus\C^{b_2}\oplus\C$ graded by $-2$, $0$, and $2$. After an overall shift of $8$, this induces the required grading on $U_k$.

\begin{table}[ht]
\begin{center}
\scalebox{0.9}{
\begin{tabular}{|c|c|c|c|}
  \hline
   & dimension of $\cap\H^4(M,\C)$ & dimension of $\cap\H^6(M,\C)$ & dimension of $\cap\H^8(M,\C)$ \\
  \hline
  $U_1$ & ${b_2 \choose 0}=1$ & $b_2{b_2 \choose 0}=b_2$ & $b_2{b_2 \choose 1}-{b_2 \choose 2}=\frac{(b_2+1)b_2}{2}$ \\
  $U_2$ & ${b_2 \choose 1}=b_2$ & $b_2{b_2 \choose 1}=b_2^2$ & $b_2{b_2 \choose 2}+b_2{b_2 \choose 0}-{b_2 \choose 3}=\frac{b_2(b_2^2+2)}{3}$ \\
  $U_3$ & ${b_2 \choose 2}=\frac{b_2(b_2-1)}{2}$ & $b_2{b_2 \choose 2}=\frac{b_2^2(b_2-1)}{2}$ & $b_2{b_2 \choose 3}+b_2{b_2 \choose 1}-{b_2 \choose 4}-{b_2 \choose 0}=\frac{(b_2+1)(b_2-1)(b_2^2-2b_2+8)}{8}$ \\
  $\vdots$ & $\vdots$ & $\vdots$ & $\vdots$ \\
  $U_k$ & ${b_2 \choose k-1}$ & $b_2{b_2 \choose k-1}$ & $b_2{b_2 \choose k}+b_2{b_2 \choose k-2}-{b_2 \choose k+1}-{b_2 \choose k-3}$ \\
  & & & \\
  $V_1$ & & ${b_2 \choose 0}=1$ & ${b_2 \choose 1}=b_2$ \\
  $V_2$ & & ${b_2 \choose 1}=b_2$ & ${b_2 \choose 2}+{b_2 \choose 0}=\frac{b_2^2-b_2+2}{2}$ \\
  $V_3$ & & ${b_2 \choose 2}=\frac{b_2(b_2-1)}{2}$ & ${b_2 \choose 3}+{b_2 \choose 1}=\frac{b_2(b_2^2-3b_2+8)}{6}$ \\
  $V_4$ & & ${b_2 \choose 3}=\frac{b_2(b_2-1)(b_2-2)}{6}$ & ${b_2 \choose 4}+{b_2 \choose 2}=\frac{b_2(b_2-1)(b_2^2-5b_2+18)}{24}$ \\
  $\vdots$ & & $\vdots$ & $\vdots$ \\
  $V_k$ & & ${b_2 \choose k-1}$ & ${b_2 \choose k}+{b_2 \choose k-2}$ \\
  & & & \\
  \hline
\end{tabular}}
\end{center}
\caption{Dimensions of $U_{\bullet}$ and $V_{\bullet}$ in degrees $4$, $6$, and $8$}
\label{dimensions8}
\end{table}

\begin{example}
The representation $U_1$ is generated by a highest weight vector in
$$U_1\cap\H^4(M,\C)=U_1\cap\H^{2,2}(M)\cong\C,$$
whereas
$$U_2\cap\H^4(M,\C)\cong\C^{b_2}$$
with
$$\mathrm{dim}(U_2\cap\H^{3,1}(M))=1=\mathrm{dim}(U_2\cap\H^{1,3}(M))\qquad\mbox{and}\qquad\mathrm{dim}(U_2\cap\H^{2,2}(M))=b_2-2.$$
Similarly
$$U_3\cap\H^4(M,\C)\cong\Lambda^2\C^{b_2}$$
with
$$\mathrm{dim}(U_3\cap\H^{3,1}(M))=b_2-2=\mathrm{dim}(U_3\cap\H^{1,3}(M))\qquad\mbox{and}\qquad\mathrm{dim}(U_3\cap\H^{2,2}(M))={b_2-2 \choose 2}+1.$$
In general
$$U_k\cap\H^4(M,\C)\cong\Lambda^{k-1}\C^{b_2}$$
with
$$\mathrm{dim}(U_k\cap\H^{3,1}(M))={b_2-2 \choose k-2}=\mathrm{dim}(U_k\cap\H^{1,3}(M))\hspace*{2mm}\mbox{and}\hspace*{2mm}\mathrm{dim}(U_k\cap\H^{2,2}(M))={b_2-2 \choose k-1}+{b_2-2 \choose k-3}.$$
\end{example}

We can now prove our result in dimension eight.

\begin{theorem}
\label{eight}
Let $M$ be an irreducible compact hyperk{\"a}hler manifold of complex dimension eight whose odd Betti numbers all vanish. Consider the part of the cohomology of $M$ not generated by $\H^2(M,\C)$. Of the possible irreducible representations of $\mathfrak{so}(b_2+2,\C)$ in its Looijenga-Lunts-Verbitsky decomposition, assume that only $U_1$, $U_2$, $U_3$, $V_1$, $V_2$, $V_3$, $V_4$, $V_5$, and $T$ can appear. Then the second Betti number $b_2$ of $M$ is at most $24$.
\end{theorem}

\begin{proof}
The proof follows the same ideas as that of Theorem~\ref{six}. When $n=4$ and all odd Betti numbers vanish, Salamon's relation gives
$$16b_6+88b_4+208b_2+376=4b_8.$$
The part of the complex cohomology of $M$ generated by $\H^2(M,\C)$ contributes ${b_2+1 \choose 2}$, ${b_2+2 \choose 3}$, and ${b_2+3 \choose 4}$ to $b_4$, $b_6$, and $b_8$, respectively. Writing the remainders of these Betti numbers as $b_4^{\prime}$, $b_6^{\prime}$, and $b_8^{\prime}$, Salamon's relation becomes
\begin{eqnarray*}
-4{b_2+3 \choose 4}+16{b_2+2 \choose 3}+88{b_2+1 \choose 2}+208b_2+376 & = & 4b_8^{\prime}-16b_6^{\prime}-88b_4^{\prime} \\
 & = & 4(b_8^{\prime}-4b_6^{\prime}-22b_4^{\prime}).
\end{eqnarray*}
After simplifying and factoring, the left-hand side becomes
$$-\frac{1}{6}(b_2+3)(b_2+8)\left(b_2-\frac{21+\sqrt{817}}{2}\right)\left(b_2-\frac{21-\sqrt{817}}{2}\right),$$
which is negative if $b_2\geq 25>\frac{21+\sqrt{817}}{2}\approx 24.7916$. It remains to show that the right-hand is non-negative for $b_2\geq 25$.

First consider the contribution of $V_k$ to the right-hand side. Each occurrence of $V_k$ in the Looijenga-Lunts-Verbitsky decomposition contributes
$${b_2 \choose k}+{b_2 \choose k-2}-4{b_2 \choose k-1}=\frac{b_2(b_2-1)\cdots (b_2-k+3)}{k!}(b_2^2-(6k-3)b_2+(6k^2-12k+2))$$
to $b_8^{\prime}-4b_6^{\prime}-22b_4^{\prime}$. This contribution will be non-negative if
$$b_2\geq\frac{6k-3+\sqrt{12k^2+12k+1}}{2}.$$
In particular, if $k=5$ we require $b_2\geq 23$, whereas if $k=6$ we would require $b_2\geq 28>\frac{33+\sqrt{505}}{2}\approx 27.7361$.

Next consider the contribution of $U_k$ to the right-hand side. Each occurrence of $U_k$ in the Looijenga-Lunts-Verbitsky decomposition contributes
$$b_2{b_2 \choose k}+b_2{b_2 \choose k-2}-{b_2 \choose k+1}-{b_2 \choose k-3}-4b_2{b_2 \choose k-1}-22{b_2 \choose k-1}\hspace*{45mm}$$
$$\hspace*{25mm}=\frac{b_2(b_2-1)\cdots (b_2-k+4)(b_2-k+2)(b_2+4)}{(k+1)(k-1)!}(b_2^2-(6k+3)b_2+(6k^2-12k-16))$$
to $b_8^{\prime}-4b_6^{\prime}-22b_4^{\prime}$. This contribution will be non-negative if
$$b_2\geq\frac{6k+3+\sqrt{12k^2+84k+73}}{2}.$$
In particular, if $k=3$ we require $b_2\geq 21>\frac{21+\sqrt{433}}{2}\approx 20.9043$, whereas if $k=4$ we would require $b_2\geq 26>\frac{27+\sqrt{601}}{2}\approx 25.7577$.

Finally, the contribution of the trivial representation $T$ to $b_8^{\prime}-4b_6^{\prime}-22b_4^{\prime}$ is just $1$, so always positive.

In conclusion, if we allow the irreducible representations $U_1$, $U_2$, $U_3$, $V_1$, $V_2$, $V_3$, $V_4$, $V_5$, and $T$ to appear in the Looijenga-Lunts-Verbitsky decomposition of the complex cohomology of $M$, then for $b_2\geq 25$ the right-hand side $4(b_8^{\prime}-4b_6^{\prime}-22b_4^{\prime})$ of Salamon's relation will be non-negative, whereas the left-hand side will be negative. This contradiction proves that the second Betti number $b_2$ can be at most $24$.
\end{proof}

\begin{remark}
As the proof shows, if we allow $U_4$ to appear then we can conclude that $b_2$ can be at most $25$, and if we allow $V_6$ to appear then we can conclude that $b_2$ can be at most $27$. Allowing $U_k$ and $V_k$ to appear for larger $k$, we still obtain upper bounds on $b_2$, but these bounds grows roughly linearly with $k$.
\end{remark}

\begin{remark}
Following the same steps in higher dimensions, the pattern appears to be that in dimension $2n$, the polynomial in $b_2$ on the left-hand side has largest root $\frac{21+\sqrt{433+96n}}{2}$. Given an irreducible compact hyperk{\"a}hler manifold whose odd Betti numbers all vanish, we expect one can show that its second Betti number must satisfy $b_2\leq \frac{21+\sqrt{433+96n}}{2}$, under some restrictions on the irreducible representations appearing in the Looijenga-Lunts-Verbitsky decomposition of its cohomology. The author has not rigorously verified this, though this direction has been pursued by Kurnosov~\cite{kurnosov15}, and indeed, Kim and Laza~\cite{kl19} later identified sufficient restrictions to guarantee that a bound of this form holds. Their restrictions are again related to Nagai's conjecture; see~\cite{kl19} and Green, et al.~\cite{gklr19} for details.
\end{remark}

\begin{flushleft}
Department of Mathematics\hfill sawon@email.unc.edu\\
University of North Carolina\hfill www.unc.edu/$\sim$sawon\\
Chapel Hill NC 27599-3250\\
USA\\
\end{flushleft}

\end{document}